\newtheorem{theorem}{Theorem}[section]
\newtheorem*{thma}{Theorem A}
\newtheorem*{thmb}{Theorem B}
\newtheorem{corollary}[theorem]{Corollary} 
\newtheorem*{msrconj}{Maximal Symmetry Rank Conjecture}
\newtheorem{lemma}[theorem]{Lemma}
\newtheorem{proof-ref}{Proof(Proposition{P:5t2})}
\def\ccc{\mathbb{C}}
\DeclareMathOperator{\Fix}{Fix}
\def\bdm{\begin{displaymath}}
\def\edm{\end{displaymath}}
\def\beq{\begin{equation}}
\def\eeq{\end{equation}}
\def\bes{\begin{equation*}}
\def\ees{\end{equation*}}
\def\epcm{\end{picture}\end{center}\end{minipage}}
\def\bpcm{\begin{minipage}{80pt}\begin{center}\begin{picture}}
\def\t2{T^2}
\def\f4{F_4}
\def\g2{G_2}
\def\p2{\frac{\pi}{2}}
\def\Fix{\textrm{Fix}}
\theoremstyle{remark}
\newtheorem{rem}[theorem]{Remark}
\theoremstyle{definition}
\begin{document}



\title[Low-dimensional manifolds with non-negative curvature and maximal symmetry rank]{Low-dimensional manifolds with non-negative curvature and maximal symmetry rank}

\author[Galaz-Garcia]{Fernando Galaz-Garcia}
\address[Galaz-Garcia]{Mathematisches Institut, WWU M\"unster, GERMANY}
\email[]{f.galaz-garcia@uni-muenster.de}

\author[Searle]{Catherine Searle$^{*}$}
\address[Searle]{Institute of Mathematics, Universidad Nacional Autonoma 
de Mexico, Cuernavaca, Morelos, MEXICO}
\email[]{csearle@matcuer.unam.mx}

\subjclass[2000]{Primary: 53C20; Secondary: 57S25} 

\thanks{$^*$
The second named author was supported in part by 
CONACYT Project \#SEP-CO1-46274, CONACYT Project \#SEP-82471 and UNAM DGAPA project IN-115408.}
\thanks {The authors thank the American Institute of Mathematics (AIM)  for its support during a workshop where the work on this paper was initiated.}

\date{\today}

\begin{abstract}
We classify closed, simply connected $n$-manifolds of non-negative sectional curvature admitting an isometric torus  action of maximal symmetry rank in dimensions $2\leq n\leq 6$. In dimensions $3k$, $k=1,2$ there is only one such manifold and it is diffeomorphic to the product of $k$ copies of the $3$-sphere.
\end{abstract}

\maketitle




\section{Introduction and main results}


As in the case of closed Riemannian manifolds of positive sectional curvature, it is of interest to classify closed, non-negatively curved Riemannian manifolds whose isometry groups are large. One possible measure for the size of the isometry group of a Riemannian manifold $M$ is its \emph{symmetry rank}, denoted by $\mathrm{symrank}(M)$ and  defined to be the rank of the isometry group of $M$. This invariant was introduced by Grove and Searle in \cite{GS}, where it was shown that the symmetry rank  of a closed, positively curved Riemannian $n$-manifold is bounded above by $[\frac{n+1}{2}]$ and any such manifold with maximal symmetry rank  is diffeomorphic to a sphere, a real or complex projective space, or a lens space.

Given a Riemannian manifold  $M$, $\mathrm{symrank}(M)=k$ if and only if a $k$-torus $T^k$ acts isometrically and (almost) effectively on $M$ and there is no such action on $M$ by a $(k+1)$-torus. In  non-negative curvature, a flat $n$-torus has maximal symmetry rank $n$. If we also require simple connectivity, the Riemannian product of  $k$ copies of the round $3$-sphere $S^3$  has non-negative curvature and  admits an (almost) effective isometric $T^{2k}$ action. A similar statement is true in dimensions $3k+1$ and $3k +2$ by substituting one of the $3$-spheres for a round $S^4$ or simply augmenting the product with a round $S^2$, respectively.   Moreover, each one of these simply connected Riemannian products of spheres, with total dimension $n=3k,3k+1$ or $3k+2$,  has maximal symmetry rank $[\frac{2n}{3}]$ among  Riemannian products of round spheres, $S^{k_1}\times\cdots\times S^{k_l}$, with corresponding total dimension $k_1+\cdots+k_l=n$. Motivated by these facts, one may make the following conjecture:


\begin{msrconj}\label{c:c}
Given a closed, simply connected, non-negatively curved Riemannian manifold $M^n$, the following  are true:
\begin{itemize}
	\item[(1)] $\mathrm{symrank}(M^n)\leq [\frac{2n}{3}]$.
	
	\item[(2)] If $n=3k$, $k\geq 1$, and $M^{3k}$ admits an (almost) effective isometric $T^{2k}$ action,  then $M^{3k}$ is diffeomorphic to the product of $k$ copies of $S^3$.
\end{itemize}
\end{msrconj}

We will verify (1) of the conjecture in dimensions $2\leq n\leq 9$. We also classify, up to diffeomorphism, closed, simply connected, non-negatively curved Riemannian $n$-manifolds with  maximal symmetry rank in dimensions $2\leq n\leq 6$, verifying (2) of the conjecture for $k=1$ and $2$. We point out that  in contrast to the  positively curved case, where there is rigidity for odd-dimensional  simply connected manifolds
 (cf. \cite{GS}), in the non-negatively curved case one can only expect rigidity in dimensions $3k$, $k\geq 1$. 


Recall that the \emph{cohomogeneity} of an action of a group $G$ on a manifold $M$ is the dimension of the orbit space $M/G$. In terms of this invariant, the Maximal Symmetry Rank Conjecture states that for a closed, simply connected, non-negatively curved Riemannian manifold of dimension $3k$, $3k-1$ or $3k-2$, $k\geq 1$, the maximal symmetry rank corresponds to an (almost) effective, isometric torus action of cohomogeneity $k$. In particular, in dimensions $2$ and $3$, maximal symmetry rank corresponds to  an (almost) effective isometric cohomogeneity one torus action. The Maximal Symmetry Rank Conjecture holds in this case without any curvature assumptions, as a consequence of Mostert's \cite{M} and Neumann's \cite{N} classification of smooth  $2$- and $3$-manifolds of cohomogeneity one.  Both  $S^2$ and $S^3$, the simply connected manifolds with maximal symmetry rank in dimensions $2$ and $3$, equipped with the standard Riemannian metric of constant positive curvature, admit effective, isometric torus actions of cohomogeneity one. 

In dimensions $4$, $5$ and $6$, maximal symmetry rank corresponds to an (almost) effective, isometric  torus action of cohomogeneity two. In a curvature-free setting, smooth torus actions of cohomogeneity two on closed, smooth manifolds were studied by Orlik and Raymond \cite{OR, OR2} and Pao \cite{Pa1,Pa2}, in dimension  $4$,  and by Oh \cite{Oh,Oh2} in dimensions $5$ and $6$. Their work includes topological classification results in the simply connected case, given in terms of orbit space data. In our case, the presence of non-negative curvature imposes restrictions on the orbit space structure of the torus action. This observation allows us to apply the classification results of Orlik and Raymond and Oh to obtain our first main result, which we prove in section~\ref{S:Proof_thm_A}.


\begin{thma}
\label{T:A}
Let $M^n$ be a closed, simply connected Riemannian $n$-manifold with nonnegative curvature and an (almost) effective, isometric  torus action of cohomogeneity two.
	\begin{itemize}
		\item[(1)] If $n=4$, then $M^4$ is diffeomorphic to $S^4$, $\ccc P^2$, $S^2\times S^2$ or $\ccc P^2\#\pm \ccc P^2$.
		\item[(2)] If $n=5$, then $M^5$ is diffeomorphic to $S^5$, $S^2\times S^3$ or $S^2\tilde{\times}S^3$, the non-trivial $S^3$-bundle over $S^2$.
		\item[(3)] If $n=6$, then $M^6$ is diffeomorphic to $S^3\times S^3$.
	\end{itemize}
\end{thma}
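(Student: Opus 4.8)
The plan is to reduce the problem to a statement about the orbit space of the torus action and then invoke the curvature-free classification results of Orlik--Raymond (for $n=4$) and Oh (for $n=5,6$). The key observation is that the quotient $M^n/T^k$ of a closed, simply connected, non-negatively curved manifold by an isometric torus action is an Alexandrov space of non-negative curvature whose dimension equals the cohomogeneity; since here the cohomogeneity is two, $M^n/T^k$ is a non-negatively curved Alexandrov $2$-space, hence (by the classification of such spaces, together with the fact that the quotient is simply connected and has non-empty boundary precisely when there are singular orbits coming from isotropy) homeomorphic to a $2$-disk, a $2$-sphere, or a plane/half-plane-type object ruled out by compactness. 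So the first step is to establish: \emph{the orbit space $M^n/T^k$ is homeomorphic to a $2$-disk $D^2$ or to $S^2$.} I would prove this by combining Synge-type/soul-type arguments for Alexandrov spaces (the quotient is a non-negatively curved Alexandrov space, Perelman's stability and the structure theory of $2$-dimensional Alexandrov spaces apply) with the observation that $M^n$ simply connected forces $M^n/T^k$ simply connected, eliminating the $2$-torus and Klein bottle quotients and the cylinder/M\"obius band.

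Next I would analyze the weighted orbit space structure, i.e., the boundary circle or sphere together with its decomposition into arcs/points labeled by the isotropy subgroups (which are subtori or finite extensions thereof). Non-negative curvature constrains this labeled structure: at each vertex of the boundary polygon two isotropy circles meet, and the non-negative curvature condition (via the angle being $\le \pi$, or more precisely via results bounding the number of singular orbits, cf. the Extent Lemma / isotropy results presumably established earlier in the paper) limits the number of such vertices. For $n=4$ this is exactly the data Orlik--Raymond use: a simply connected closed $4$-manifold with an effective $T^2$-action is determined by the sequence of weights on the boundary of the quotient $2$-disk, and the short admissible weight sequences compatible with non-negative curvature yield precisely $S^4$, $\mathbb{C}P^2$, $S^2\times S^2$, and $\mathbb{C}P^2\#\pm\mathbb{C}P^2$; the connected sums of more copies of $\mathbb{C}P^2$, which do occur for general $T^2$-actions, are excluded because their weight sequences are too long to fit a non-negatively curved $2$-disk quotient. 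For $n=5,6$ I would feed the analogous curvature-restricted orbit data into Oh's classification: in dimension $5$ the effective $T^3$-actions on simply connected manifolds over a $2$-disk are classified by weighted data, and non-negative curvature cuts the list down to $S^5$, $S^2\times S^3$, and the non-trivial $S^3$-bundle over $S^2$; in dimension $6$, $T^4$ acting on a simply connected $6$-manifold with cohomogeneity-two quotient, once the quotient is forced to be a disk with a minimal number of singular strata, leaves only $S^3\times S^3$.

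Concretely, the proof has the shape: (i) show $M^n/T^k$ is a non-negatively curved Alexandrov $2$-space, simply connected, hence $D^2$ or $S^2$; (ii) if the quotient is $S^2$ (no boundary), then the action is free over the regular part and one gets a bundle/product structure forcing, after checking simple connectivity, $M^n$ to be one of the bundle-type examples (or this case is ruled out/absorbed); (iii) if the quotient is $D^2$, read off the boundary weight data, use non-negative curvature to bound the number of singular orbit strata (this is where the earlier structural lemmas of the paper do the work), enumerate the finitely many admissible weighted disks, and match each to a manifold on the list via Orlik--Raymond resp. Oh. The main obstacle, and the technical heart of the argument, will be step (iii): proving the sharp bound on the number of vertices/singular strata of the weighted $2$-disk from non-negative curvature, and then checking that every short admissible weight configuration is actually realized by a non-negatively curved metric (so the list is neither too long nor too short), ensuring in particular that the longer $\#^m \mathbb{C}P^2$ configurations for $m\ge 2$, and the exotic weight patterns in dimensions $5$ and $6$, genuinely cannot support non-negative curvature. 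The dimension-six case should be the most rigid: one expects that the only admissible weighted disk is the one whose isotropy data is forced by $S^3\times S^3$, and the bulk of the argument there is showing no other labeled disk survives the curvature constraint.
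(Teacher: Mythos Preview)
Your proposal is essentially the paper's approach: the orbit space is a non-negatively curved Alexandrov $2$-space, curvature bounds the number of singular strata, and then one invokes Orlik--Raymond and Oh. The paper's only sharpening is that step (iii) is made completely explicit: each orbit with $T^2$ isotropy has space of directions an interval of length $\pi/2$, and a Toponogov triangulation argument (the triangles of the convex polygon $M^*$ have total angle $\ge 3\pi$ but vertex angles sum to $\le 5\pi/2$) gives at most four such orbits; plugging $k\le 4$ directly into the cited classifications yields the list.

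Two minor corrections. First, your step (ii) never occurs: for a cohomogeneity two $T^n$-action, $n\ge 2$, on a closed simply connected manifold it is a known topological fact (Orlik--Raymond, Kim--McGavran--Pak, Oh) that there are at least $n$ orbits with $T^2$ isotropy, so the quotient is always $D^2$, never $S^2$. Second, the realization check you describe---verifying that each admissible weighted disk actually carries a non-negatively curved metric---is not part of the proof of Theorem~A as stated; the theorem is a one-directional implication, and the existence of non-negatively curved metrics on the listed manifolds is remarked separately.
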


It is well-known that the manifolds listed in Theorem~A  admit a metric of non-negative curvature (cf. \cite{C,T} for $\ccc P^2\#\pm \ccc P^2$ and \cite{Pav} for $S^2\tilde{\times} S^3$). We then show in section~\ref{S:Proof_thm_B} that the conjectured bound on the symmetry rank holds through dimension $9$, obtaining our second main result.


\begin{thmb}\label{T:B} Let $T^k$ act (almost) effectively and isometrically on a closed, simply connected Riemannian $n$-manifold with non-negative curvature. If $n\leq 9$, then $k\leq  [\frac{2n}{3}]$.
\end{thmb}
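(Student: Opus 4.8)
The plan is to argue by induction on $n$. It suffices to prove that for $2\le n\le 9$ no $T^m$ with $m=[\tfrac{2n}{3}]+1$ can act (almost) effectively and isometrically on a closed, simply connected, non-negatively curved $M^n$: restricting a larger torus to a subtorus then yields the stated bound. Passing to a finite cover of the acting torus, we may take the action effective, so the principal isotropy is trivial and the cohomogeneity equals $c=n-m$, which is $0$ for $n\le 3$, $1$ for $4\le n\le 6$, and $2$ for $7\le n\le 9$. If $n\le 3$ then $c=0$, so $M^n$ is a homogeneous space of a torus, hence a torus, contradicting simple connectivity.

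Next I would handle all $n\ge 4$ by showing that a closed, simply connected $M^n$ admits no effective cohomogeneity-one torus action; this already gives $k\le n-2=[\tfrac{2n}{3}]$ for $4\le n\le 6$. If $G=T^{n-1}$ acts with cohomogeneity one, then $M/G$ is $S^1$ or $[0,1]$. In the first case $M\to S^1$ is a principal $G$-bundle, so $\pi_1(M)$ surjects onto $\mathbb{Z}$, a contradiction. In the second case each of the two non-principal orbits $O_0,O_1$ has codimension at most $2$ (the normal slice sphere must be a homogeneous space of a torus, hence a circle or a point) and in fact exactly $2$ (a codimension-one non-principal orbit would produce a non-orientable neighbourhood, impossible as $M$ is simply connected). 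Hence each $O_i$ has isotropy with identity component a circle rotating the normal disk $D^2$, so $\dim O_i=n-2$ and $\pi_1(O_i)\cong\mathbb{Z}^{n-2}$; writing $M$ as a union of the two $D^2$-bundles over $O_0$ and $O_1$ glued along a principal orbit $T^{n-1}$ and computing $H_1$ by Mayer--Vietoris yields $\operatorname{rk}H_1(M)\ge n-3\ge 1$, again a contradiction.

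For $7\le n\le 9$ it then remains to rule out an effective isometric cohomogeneity-two action of $T^{n-2}$ on a closed, simply connected, non-negatively curved $M^n$. The action cannot be free: otherwise $M^n\to M^n/T^{n-2}$ would be a principal torus bundle over a closed surface $B$, and the homotopy sequence of the bundle forces a free abelian subgroup of positive rank in $\pi_1(M)$. So some point has positive-dimensional isotropy, and the identity component of such an isotropy group contains a circle $S^1$ fixing the positive-dimensional orbit through that point; let $N$ be a component of $\Fix(S^1,M)$ of maximal dimension, a closed, totally geodesic—hence non-negatively curved—submanifold with $0<\dim N<n$ and $n-\dim N$ even. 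Let $K\supseteq S^1$ be the ineffective kernel of the $T^{n-2}$-action on $N$; then $T^{n-2}/K$, of rank $n-2-\dim K$, acts effectively on $N$, and since $K$ acts effectively on each normal space $\dim K\le\tfrac12(n-\dim N)$. Writing the codimension of $N$ as $2j$ and using $\operatorname{rk}\le\dim$ for effective torus actions on closed manifolds, we get $n-2-\dim K\le n-2j$, i.e. $\dim K\ge 2j-2$, hence $j\le 2$. If $j=1$ then $\dim K=1$, the residual torus has rank $n-3$, and by the connectedness principle for non-negatively curved manifolds with symmetry $N\hookrightarrow M$ is $(n-4)$-connected, so $N$ is simply connected; the inductive hypothesis bounds its symmetry rank by $[\tfrac23(n-2)]<n-3$, a contradiction. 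If $j=2$ then $\dim K=2$ and the residual torus of rank $n-4$ acts effectively and transitively on $N^{\,n-4}$, forcing $N$ to be a flat, totally geodesic $(n-4)$-torus in $M^n$, fixed pointwise by $K\cong T^2$.

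The main obstacle is precisely this last configuration. In a curvature-free setting a simply connected $M^n$ may well contain such a totally geodesic flat torus, so non-negative curvature has to be used decisively here: the plan is to exclude it by applying the Isotropy Lemma to the two non-zero weights of $K$ on the normal $\mathbb{R}^4$ and then invoking the Soul and Double Soul Theorems, which should force $M$ to carry structure incompatible with $\pi_1(M)=1$. Pinning down this flat-torus case—and, secondarily, making sure the connectedness principle is strong enough in the codimension-two case of the induction—is where the real work lies; the rest is bookkeeping with orbit spaces, Euler characteristics, and the homotopy sequences above.
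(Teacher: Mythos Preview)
Your treatment of $n\le 6$ is essentially the paper's: homogeneous for $n\le 3$, cohomogeneity one for $4\le n\le 6$. Your direct Mayer--Vietoris computation is a fine substitute for the paper's citation of Parker and Pak.

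For $7\le n\le 9$ there is a genuine gap, and you say so yourself: the configuration in which a $2$-torus $K\subset T^{n-2}$ fixes a totally geodesic flat $T^{\,n-4}\subset M$ is not excluded, and the Isotropy Lemma/Soul Theorem sketch you offer is not an argument. There is also a second soft spot: the ``connectedness principle for non-negatively curved manifolds with symmetry'' you invoke in the $j=1$ case is not a theorem in the form you need---Wilking's connectedness lemma requires positive sectional curvature, and the known non-negative variants need extra hypotheses you have not checked. So the inductive step is not secured even before one reaches the flat-torus obstacle.

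The paper avoids all of this with a short, direct orbit-space argument that you are missing. For an (almost) effective $T^{n-2}$-action of cohomogeneity two on a closed, simply connected $M^n$, the quotient $M^*$ is a $2$-disk whose boundary is a cycle of arcs with circle isotropy meeting at vertices with $T^2$ isotropy; by the general structure theory (Orlik--Raymond, Kim--McGavran--Pak, Oh) there are at least $n-2$ such vertices. But $M^*$ is a non-negatively curved Alexandrov surface in which each such vertex has space of directions of length $\pi/2$, and a one-line Toponogov triangulation (join one vertex to the two non-adjacent ones and compare angle sums) shows there can be at most four vertices. For $n\ge 7$ this gives $n-2\ge 5>4$, a contradiction. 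That is the whole proof in these dimensions---no induction, no connectedness lemma, no soul argument.
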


It follows from Theorem~B that the closed, simply connected Riemannian $n$-manifolds with nonnegative curvature and maximal symmetry rank, in dimensions $2\leq n\leq 6$, are $S^2$, $S^3$ and those listed in Theorem~A.

We remark that in dimensions $n\geq 7$ there are no general topological classification results  to be found in the literature on smooth $n$-manifolds with smooth torus actions of cohomogeneity $n-[\frac{2n}{3}]$, corresponding to the (conjectured) case of maximal symmetry rank  for a closed, simply connected, non-negatively curved Riemannian $n$-manifold. 


\section{Proof of Theorem~A}
\label{S:Proof_thm_A}

We prove (1)--(3) of Theorem~A in subsections~\ref{SS:D4} and \ref{SS:D5}. We begin by recalling  some  facts about smooth cohomogeneity two torus actions on smooth manifolds.

\subsection{Torus actions of cohomogeneity two} 
\label{SS:cohom2_torus}We first fix some terminology and notation. Given a compact Lie group $G$ acting (on the left) on a smooth manifold $M$,   we denote by $G_x=\{\, g\in G : gx=x\, \}$ the \emph{isotropy group} at $x\in M$ and by $Gx=\{\, gx : g\in G\, \}\simeq G/G_x$ the \emph{orbit} of $x$. The \emph{ineffective kernel} of the action is the subgroup $K=\cap_{x\in M}G_x$. We say that $G$ acts \emph{effectively} on $M$ if $K$ is trivial. The action is called \emph{almost effective} if $K$ is finite. We will denote the fixed point set $\{\, x\in M : gx=x \text{ for all } g\in G \, \}$ of this action by $\Fix(M , G )$. We will let $Q$ denote the union of singular orbits. Given a subset $A\subset M$, we will denote its image in $M/G$ under the orbit projection map by $A^*$. When convenient, we will also denote the orbit space $M/G$ by $M^*$.

\begin{rem} We will henceforth assume all manifolds to be smooth, and we will only consider smooth,  (almost) effective actions. 
\end{rem}

Let $M^{n+2}$  be a closed, simply connected manifold with a cohomogeneity two action of a compact connected Lie group $G$. It is well-known (see, for example, \cite{Br} Chapter IV) that in the presence of singular orbits, the orbit space $M^*$ of the action is homeomorphic to a $2$-disk $D^2$ with boundary $Q^*$, the projection of  the singular orbits, and the interior points of $M^*\simeq D^2$ correspond to the principal orbits. In the particular case where $G=T^n$, $n\geq 2$, the orbit space structure  was analyzed in \cite{OR, KMP} (see also \cite{Oh}). Here, the only possible non-trivial isotropy groups are $T^1$ and $T^2$. The boundary circle, $Q^*$,  is a union of arcs, and the interior of each arc corresponds to orbits with isotropy $T^1$, while the endpoints of each arc correspond to orbits with isotropy $T^2$ (see Figure~\ref{F:orbit_space}). Moreover, 
there must be at least $n$ orbits with isotropy $T^2$.  For proofs of these facts, please refer
 to \cite{OR, KMP,Oh}.

\begin{figure}
\psfrag{A}{Isotropy $T^2$}
\psfrag{B}{Isotropy $T^1$}
\psfrag{C}{Principal orbits}
\includegraphics{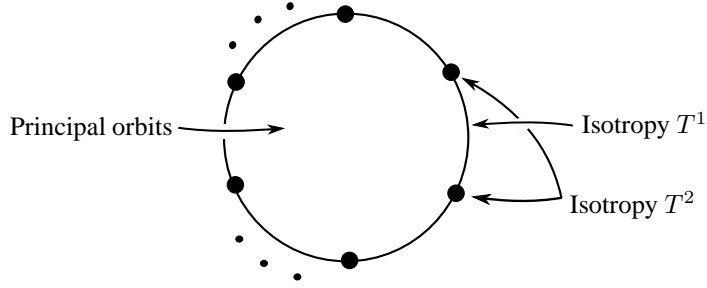}
\caption{Orbit space structure}\label{F:orbit_space}
\end{figure}

The following lemma allows us to limit the total number of orbits with isotropy $T^2$  in the presence of non-negative curvature.


\begin{lemma}\label{l:4T2} Let $M^{n+2}$, $n\geq 2$, be a closed, simply connected manifold with non-negative curvature and an isometric  $T^{n}$-action. Then there are at most four orbits with isotropy $T^2$. 
\end{lemma}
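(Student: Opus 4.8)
The plan is to use the soul theorem together with the structure of fixed point sets of torus actions on non-negatively curved manifolds. First I would pass to the universal cover of an orbit — this is not quite the right move; instead, the key idea is to analyze the orbit space $M^*$, which is a $2$-disk whose boundary is a polygon: the vertices correspond to $T^2$-isotropy orbits and the edges to $T^1$-isotropy orbits. Each $T^2$-isotropy orbit is a point fixed by some circle subgroup, and I want to bound the number of vertices.

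The main tool I expect to use is the following: suppose $\sigma\subset T^n$ is a circle subgroup, and consider $\Fix(M,\sigma)$, which is a closed, totally geodesic submanifold of $M$ inheriting non-negative curvature. The component(s) of $\Fix(M,\sigma)$ of lowest codimension, together with the induced action of $T^n/\sigma$, should again be amenable to induction. More concretely, the strategy I would follow is: take two adjacent edges of the boundary polygon meeting at a vertex $v$; the two circles $\sigma_1,\sigma_2$ giving the $T^1$-isotropy along those edges span the $T^2$-isotropy at $v$. Now I would look at a third vertex $v'$ "opposite" to $v$ and derive a contradiction if there were five or more vertices, by examining how the fixed point components of these circles sit inside $M$. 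Alternatively — and this is likely the cleaner route — one can use the fact that a non-negatively curved manifold on which a torus acts isometrically has the property that the fixed point set of the torus is non-empty (by a theorem of Berger/Sugahara-type, or more relevantly here one does not even need this) and then apply Frankel-type intersection arguments: in non-negative curvature, two totally geodesic submanifolds whose dimensions add up to at least $\dim M$ must intersect, and under strict positivity they always intersect, but in the non-negative case one gets that they either intersect or the manifold splits.

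Here is the cleanest approach I would actually carry out. Consider the edges of the boundary polygon $Q^*$ labelled cyclically by the circle isotropy groups $\sigma_1,\sigma_2,\dots,\sigma_m$ (so $\sigma_i$ and $\sigma_{i+1}$ meet at a vertex, generating a $T^2$). Each set $F_i:=\Fix(M,\sigma_i)$ is a closed totally geodesic submanifold with non-negative curvature, invariant under the residual torus action, and one checks from the orbit-space picture that $F_i^*$ is exactly the closure of the corresponding edge (an arc in the boundary of $D^2$), so $F_i$ has codimension $2$ and is a non-negatively curved manifold of cohomogeneity one under $T^n/\sigma_i\cong T^{n-1}$. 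Crucially, $F_i\cap F_{i+1}$ is the $T^2$-orbit point at the shared vertex and is non-empty, while for non-adjacent indices $F_i\cap F_j$ corresponds to the intersection of two disjoint arcs, hence is \emph{empty}. Now I invoke the Frankel-type theorem for non-negative curvature (the "Soul Theorem"/Cheeger–Gromoll splitting consequences, or the version due to the soul construction): if $M$ is non-negatively curved and closed, and $A, B$ are disjoint closed totally geodesic submanifolds, then $\dim A + \dim B < \dim M$ — equivalently $\mathrm{codim}\, A + \mathrm{codim}\, B > \mathrm{codim}(\text{nothing})$... the sharp statement I want is: disjoint closed totally geodesic submanifolds in a closed non-negatively curved manifold cannot have $\dim A + \dim B \geq \dim M$. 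Applying this with $A = F_i$, $B = F_j$ of codimension $2$ each in $M^{n+2}$ gives $\dim A + \dim B = 2(n+2) - 4 = 2n$, and $\dim M = n+2$, so $2n \geq n+2$ as soon as $n\geq 2$ — hence non-adjacent $F_i$ \emph{must} intersect, contradicting emptiness, \emph{unless} $m\leq 4$ so that no two edges are non-adjacent-with-both-disjoint... wait: in a polygon with $m$ edges, every edge has a non-adjacent edge precisely when $m\geq 4$; with $m=4$ the two pairs of opposite edges would force intersection. So this shows $m\leq 3$, which is too strong — the lemma claims $m\leq 4$.

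So the honest plan is to be more careful about the Frankel argument in the non-negatively curved (as opposed to positively curved) setting: disjoint totally geodesic submanifolds with dimensions summing to $\dim M$ are possible but force an isometric splitting (via the splitting theorem applied to the universal cover, or a soul-type argument), and a split factor contributes extra symmetry rank that one can track. For $m = 4$ this borderline case is exactly what survives; for $m\geq 5$ one gets a genuine contradiction because then some pair of non-adjacent edges has $F_i, F_j$ disjoint with $\dim F_i + \dim F_j > \dim M$ strictly — indeed for $m\geq 5$ one can choose the geometry of the polygon... no, the dimensions don't change. The real refinement, which I expect to be the main obstacle, is this: when $m\geq 5$ there exist two edges $i,j$ that are non-adjacent \emph{and} such that there is a third edge separating them on both sides, and one then uses the stronger fact that $F_i$ is itself a cohomogeneity-one $T^{n-1}$-manifold containing further codimension-two fixed-point submanifolds; inducting on $n$, or directly bounding within $F_i$, forces a contradiction. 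I would therefore organize the proof as an induction on $n$: the base case $n=2$ is a cohomogeneity-two $T^2$-action on a simply connected non-negatively curved $4$-manifold where the claim $m\leq 4$ can be checked via the $4$-dimensional soul/fixed-point analysis (fixed point components of circle actions are surfaces, Euler characteristic and Lefschetz-type counting bound the number of $T^2$-points), and the inductive step restricts to $F_1$ to push $m$ down. The hard part will be making the restriction argument genuinely reduce the problem — ensuring the residual action on $F_i$ is still (almost) effective of cohomogeneity one and that a bound on $T^2$-orbits upstairs follows from the structure downstairs — and handling the borderline splitting case cleanly so that exactly four, and not five, $T^2$-orbits survive.
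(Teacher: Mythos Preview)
Your proposal has a genuine gap, and in fact you essentially identify it yourself: the Frankel-type intersection argument does not work in non-negative curvature in the form you need. In strictly positive curvature Frankel's theorem says that two closed totally geodesic submanifolds $A,B$ with $\dim A+\dim B\geq\dim M$ must intersect, but in non-negative curvature this simply fails (think of two parallel closed geodesics in a flat torus, or two fibres of a Riemannian product). You correctly compute that if the argument \emph{did} apply it would force $m\leq 3$, which contradicts the existence of examples with exactly four $T^2$-orbits; so the method is not merely incomplete but gives the wrong answer. Your proposed repairs---invoking an isometric splitting in the borderline case, or inducting on $n$ by restricting to a codimension-two fixed set---are not carried out, and it is not clear how either would single out the correct bound $m\leq 4$ rather than $3$ or $5$. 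There is also an unjustified step earlier: the claim that $F_i^*$ is exactly the closure of one edge presumes that the circle $\sigma_i$ appears as isotropy along only one edge, which is not automatic.

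The paper's proof is entirely different and much shorter: it never looks at fixed-point submanifolds in $M$ at all. Instead it works directly in the orbit space $M^*$, which is a non-negatively curved $2$-dimensional Alexandrov space homeomorphic to a disk. The key observation is that at each boundary point with $T^2$-isotropy the space of directions is an interval of length $\pi/2$. If there were five such points $p_0^*,\dots,p_4^*$ on $\partial M^*$, one triangulates $M^*$ into three geodesic triangles with vertices among the $p_i^*$; Toponogov's comparison forces the total angle sum to be at least $3\pi$, while the angle constraint at the vertices caps it at $5\cdot\pi/2$, a contradiction. This is a pure comparison-geometry argument in the quotient, with no Frankel, no soul theorem, and no induction.
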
 

\begin{proof}
Recall, from the discussion above, that  $M^*$ is homeomorphic to a $2$-disk whose boundary consists of singular orbits. Since $n\geq 2$, at least two points in $\partial M^*$  correspond to orbits with isotropy $T^2$. Moreover, $M^*$ is a non-negatively curved Alexandrov space and  the space of directions at the points with isotropy $T^2$ is isometric to a closed interval of length $\pi/2$. Suppose now that there are five points $p^*_0,\ldots,p^*_4$  with isotropy $T^2$ in $\partial M^*$, and assume that the points $p^*_i$, $p^*_{i+1}$, $0\leq i\leq 4$  are adjacent, with indices considered modulo $5$.  We obtain a triangulation of $M^*$ consisting of three triangles by taking minimal geodesics from $p^*_0$ to $p^*_2$ and $p^*_3$ (see Figure~\ref{F:triangulation}).  By Toponogov's theorem, the sum of the angles of these three triangles is bounded below by $3\pi$. On the other hand, summing the vertex angles at the points $p^*_i$, $0\leq i\leq 4$, we see that the sum of the angles of these three triangles is bounded above by $5\pi/2$, giving us a contradiction. 
\end{proof}

The following corollary is a consequence of Lemma~\ref{l:4T2} and the fact, mentioned above, that a smooth action of $T^n$ on a closed, simply connected manifold $M^{n+2}$, $n\geq 2$, must have at least $n$ orbits with isotropy $T^2$.

\begin{corollary}\label{c:nocohom2} No closed, simply connected Riemannian manifold with non-negative curvature and dimension $n\geq 7$ admits an isometric cohomogeneity two torus action. 
\end{corollary}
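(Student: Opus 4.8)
The plan is to argue by contradiction, combining Lemma~\ref{l:4T2} with the structural fact recalled above that a smooth (almost) effective $T^k$-action on a closed, simply connected manifold $M^{k+2}$, $k\geq 2$, must have at least $k$ orbits with isotropy $T^2$. Suppose $M^n$ is a closed, simply connected Riemannian manifold with non-negative curvature, $n\geq 7$, admitting an isometric cohomogeneity two torus action. Since the cohomogeneity is two, the acting torus is $T^{n-2}$, and the hypothesis $n\geq 7$ forces $n-2\geq 5\geq 2$.

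First I would apply the structural fact with $k=n-2$: the action has at least $n-2\geq 5$ orbits with isotropy $T^2$. Then I would apply Lemma~\ref{l:4T2}, again with $k=n-2\geq 2$, so that $M^n=M^{k+2}$ fits its hypotheses, concluding that there are at most four orbits with isotropy $T^2$. The resulting chain $5\leq(\text{number of orbits with isotropy }T^2)\leq 4$ is absurd, so no such $M^n$ exists, which is the assertion of Corollary~\ref{c:nocohom2}.

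The genuine content is already behind us: it is Lemma~\ref{l:4T2}, whose proof uses that the orbit space $M^*$ is a non-negatively curved Alexandrov $2$-disk and applies Toponogov's theorem to a triangulation whose vertices are the $T^2$-orbit points. The only thing to be careful about in assembling the corollary is the bookkeeping, namely verifying that the standing hypotheses (closed, simply connected, non-negatively curved, isometric smooth (almost) effective $T^{n-2}$-action with $n-2\geq 2$) are precisely those required by both ingredients; this is immediate once $n\geq 7$, so I do not expect any obstacle beyond matching notation between the corollary and the lemma.
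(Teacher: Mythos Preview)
Your proposal is correct and follows essentially the same approach as the paper: the corollary is obtained by combining Lemma~\ref{l:4T2} (at most four $T^2$-isotropy orbits) with the structural fact that a cohomogeneity two $T^{n-2}$-action on a closed, simply connected $(n)$-manifold has at least $n-2$ such orbits, which for $n\geq 7$ is at least five.
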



\begin{figure}
\psfrag{A}{$p_0^*$}
\psfrag{B}{$p_1^*$}
\psfrag{C}{$p_2^*$}
\psfrag{D}{$p_3^*$}
\psfrag{E}{$p_4^*$}
\includegraphics{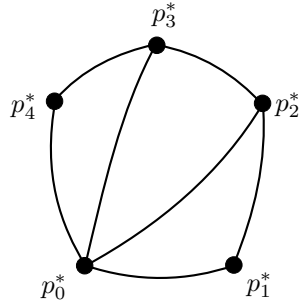}
\caption{Orbit space triangulation}\label{F:triangulation}
\end{figure}


\subsection{Dimension $4$}
\label{SS:D4} We recall the classification of closed, simply connected $4$-manifolds with a smooth $T^2$-action.  


\begin{theorem}[Orlik and Raymond \cite{OR2}]\label{T:OR}  A closed,  simply connected $4$-manifold with a $T^2$ action is equivariantly diffeomorphic to a connected sum of $S^4$, $\pm\ccc P^2$ and $S^2\times S^2$.
\end{theorem}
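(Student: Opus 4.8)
The plan is to read off the equivariant diffeomorphism type of $M^{4}$ from its labeled orbit space and then to decompose that datum as an iterated equivariant connected sum. By the structure theory recalled in Section~\ref{SS:cohom2_torus} (with $n=2$), $M^{4}$ being closed and simply connected forces the action to have singular orbits (a free $T^{2}$-action on a simply connected manifold is impossible), and the orbit space $M^{*}=M/T^{2}$ is then a $2$-disk whose boundary circle is cut by $m\ge 2$ vertices $p_{1},\dots ,p_{m}$ — the images of the isolated $T^{2}$-fixed points — into arcs $\alpha_{1},\dots ,\alpha_{m}$ (indices mod $m$), the orbits over the interior of $\alpha_{i}$ having isotropy a circle $T^{1}_{v_{i}}\subset T^{2}$. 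Fixing $\pi_{1}(T^{2})\cong\zzz^{2}$, each $v_{i}$ is a primitive vector, defined up to sign, and the slice theorem at $p_{i}=\alpha_{i}\cap\alpha_{i+1}$ (whose slice representation is the linear $T^{2}$-action on $\ccc^{2}$ with weights $v_{i}$, $v_{i+1}$) forces $\det(v_{i}\mid v_{i+1})=\pm 1$. The first input I would invoke — the backbone of Orlik and Raymond's analysis — is the converse equivariant reconstruction: this cyclic ``legally weighted $m$-gon'' $(v_{1},\dots ,v_{m})$, together with its $m$ vertices, determines $M^{4}$ up to equivariant diffeomorphism, and changing the identification $\pi_{1}(T^{2})\cong\zzz^{2}$ acts on the $v_{i}$ through $\mathrm{GL}(2,\zzz)$. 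So the classification becomes that of legally weighted $m$-gons up to $\mathrm{GL}(2,\zzz)$ and cyclic relabeling.

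I would then dispose of the small cases and set up the reduction. For $m=2$, $\{v_{1},v_{2}\}$ is a $\zzz$-basis, so after a change of identification $(v_{1},v_{2})=(e_{1},e_{2})$, realized by the linear $T^{2}$-action on $S^{4}\subset\ccc^{2}\oplus\rrr$; thus $M^{4}\cong_{T^{2}}S^{4}$. For $m=3$, normalizing $(v_{1},v_{2})=(e_{1},e_{2})$ forces $v_{3}=(\pm 1,\pm 1)$, which up to sign gives exactly two classes, realized by $\ccc P^{2}$ and $\overline{\ccc P^{2}}$. The essential tool for $m\ge 4$ is the equivariant connected sum at a pair of $T^{2}$-fixed points (automatically compatible, since the slice data matches up to reflection): on orbit-space data it deletes one arc, say of slope $w$, together with its two endpoints, and merges the two flanking arcs of slopes $a,b$; this is legal exactly when $\det(a\mid b)=\pm 1$, and the piece split off across the arc of slope $w$ is a triangle, i.e.\ a copy of $\pm\ccc P^{2}$. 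Since the $2$-gon $S^{4}$ is the neutral element for this operation, the whole theorem reduces to the combinatorial claim that a legally weighted $m$-gon with $m\ge 5$ always has three consecutive arcs $a,w,b$ with $\det(a\mid b)=\pm 1$; splitting off the corresponding $\pm\ccc P^{2}$ lowers $m$ by one, and induction terminates at $2$-gons ($S^{4}$), triangles ($\pm\ccc P^{2}$), and the irreducible $4$-gons. For the last, one checks a legal $4$-gon is equivalent either to $(e_{1},e_{2},e_{1},e_{2})$, realized by $S^{2}\times S^{2}$, or to one realized by $\ccc P^{2}\#\pm\ccc P^{2}$, the dichotomy being whether both or only one of the two pairs of opposite arcs carry equal slopes. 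Since $\ccc P^{2}\#\pm\ccc P^{2}$ is itself a connected sum of copies of $\pm\ccc P^{2}$, every $M^{4}$ so obtained is a connected sum of $S^{4}$, $\pm\ccc P^{2}$ and $S^{2}\times S^{2}$; conversely these manifolds and all their connected sums carry torus actions of this type, so the list is exactly attained.

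The main obstacle is the combinatorial claim for $m\ge 5$. Writing three consecutive weights as $v_{i-1},v_{i},v_{i+1}$, the relations $\det(v_{i-1}\mid v_{i})=\det(v_{i}\mid v_{i+1})=\pm 1$ force $v_{i+1}=\varepsilon_{i}\,v_{i-1}+a_{i}\,v_{i}$ for some $\varepsilon_{i}\in\{\pm 1\}$ and $a_{i}\in\zzz$, whence $|\det(v_{i-1}\mid v_{i+1})|=|a_{i}|$ and a removable arc exists iff $|a_{i}|=1$ for some $i$; one must rule out $|a_{i}|\ge 2$ for all $i$ when $m\ge 5$. The toric shadow of this is that a smooth complete toric surface with at least five rays always carries a torus-invariant $(-1)$-curve, hence is a non-trivial equivariant blow-up — but the Orlik--Raymond setting has no convexity or orientation on the $v_{i}$, so I would argue directly, e.g.\ by choosing within the $\mathrm{GL}(2,\zzz)$-orbit a representative minimizing $\sum_{i}\|v_{i}\|$ and extracting a contradiction from the hypothesis that all $|a_{i}|\ge 2$, or equivalently by running Orlik and Raymond's explicit sequence of equivariant $\pm\ccc P^{2}$-surgeries and checking that it terminates. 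The two remaining ingredients — the equivariant reconstruction of $M^{4}$ from the weighted polygon and the equivariant connected-sum lemma identifying the split-off summands — are standard but genuinely needed; granted them, the proof is exactly this combinatorial termination together with the bookkeeping of the $2$-, $3$- and $4$-gon cases.
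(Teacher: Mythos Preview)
The paper does not prove this statement: it is quoted verbatim as a theorem of Orlik and Raymond \cite{OR2} and used as a black box in the derivation of part~(1) of Theorem~A. There is therefore no ``paper's own proof'' to compare against.

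That said, your sketch is a faithful outline of the actual Orlik--Raymond strategy: encode $M^{4}$ by its weighted orbit polygon $(v_{1},\dots,v_{m})$ with the adjacency constraint $\det(v_{i}\mid v_{i+1})=\pm 1$, identify the small polygons ($m=2,3,4$) with $S^{4}$, $\pm\ccc P^{2}$, and $S^{2}\times S^{2}$ or $\ccc P^{2}\#\pm\ccc P^{2}$, and reduce $m\geq 5$ by equivariant connected-sum surgery. You correctly isolate the crux as the combinatorial termination claim --- that for $m\geq 5$ some triple of consecutive weights has $|\det(v_{i-1}\mid v_{i+1})|\leq 1$ --- and you are candid that you have not proved it. As written, then, this is a plan with the decisive lemma deferred; the minimization idea you mention can be made to work, but it requires genuine care (the closure condition on the cyclic sequence is what ultimately forces a small $|a_{i}|$, and this is where \cite{OR,OR2} spend their effort). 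The two auxiliary ingredients you flag --- equivariant reconstruction from the weighted polygon and the connected-sum lemma --- are indeed the other substantive inputs, and both are in \cite{OR}.
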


Let $M^4$ be a closed, simply connected, non-negatively curved $4$-manifold with an isometric $T^2$ action and let $\chi(M^4)$ be its Euler characteristic. Given a circle subgroup $T^1\subset T^2$,  it is well-known that $\chi(M^4)= \chi(\Fix((M^4, T^1))=\chi(\Fix(M^4,T^2))$, which is the number of orbits with $T^2$ isotropy. It follows from Lemma~\ref{l:4T2} and Poincar\'e duality that $2\leq \chi(M^4)\leq 4$. Combining this with Theorem~\ref{T:OR} yields (1) of Theorem~A.


\subsection{Dimensions $5$ and $6$}
\label{SS:D5}

Let $M^{n+2}$ be a closed, simply connected manifold with non-negative curvature and an isometric $T^n$-action. Let $k$ be the number of orbits with isotropy $T^2$. It follows from Lemma~\ref{l:4T2} and the discussion in subsection~\ref{SS:cohom2_torus} that  $3\leq k \leq 4$ for $n=3$, and $k=4$ for $n=4$.  Parts (2) and (3) of Theorem~A then follow from Oh's classification of closed,  simply connected $5$- and $6$-manifolds with cohomogeneity two torus actions, which we now recall. We denote the second Stiefel-Whitney class of a manifold $M$ by $w_2(M)$.


\begin{theorem}[Oh \cite{Oh,Oh2}]\label{Thm:Oh} Let $M^{n+2}$ be a closed, simply connected manifold with a $T^{n}$ action and $k$ orbits with isotropy $T^2$. 

\begin{itemize}
	\item[(1)] If $n=3$, then $M^5$ is equivariantly diffeomorphic to one of the following:
	\begin{itemize}
		\item[] $S^5$, if $k=3$;\\
		\item[] $\#(k-3)(S^2\times S^3)$, if $w_2(M^5)=0$;\\
		\item[] $(S^2\tilde{\times}S^3)\#(k-4)(S^2\times S^3)$, if $w_2(M^5)\neq 0$.\\
	\end{itemize}
	\item[(2)] If $n=4$, then $M^6$ is diffeomorphic to one of the following:
	\begin{itemize}
		\item[] $\#(k-4)(S^2\times S^4)\#(k-3)(S^3\times S^3)$, if $w_2(M^6)=0$;\\
		\item[] $(S^2\tilde{\times}S^4)\#(k-5)(S^2\times S^4)\#(k-3)(S^3\times S^3)$, if $w_2(M^6)\neq 0$.\\
	\end{itemize}
\end{itemize}
\end{theorem}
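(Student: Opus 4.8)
Since Theorem~\ref{Thm:Oh} is a curvature-free statement — nonnegative curvature enters the proof of Theorem~A only via Lemma~\ref{l:4T2}, to bound the number $k$ of orbits with isotropy $T^2$ — the plan is to recover $M^{n+2}$ from the combinatorial structure of its orbit space, following the Orlik--Raymond analysis behind Theorem~\ref{T:OR}. By subsection~\ref{SS:cohom2_torus}, $M^*$ is a $2$-disk whose boundary is cut by the $k$ orbits with isotropy $T^2$ into $k$ arcs; the interior of the $i$-th arc carries circle isotropy $C_i\subset T^n$, recorded by a primitive vector $m_i$ in the weight lattice $\mathbb{Z}^n\cong\pi_1(T^n)$, and the vertex $p_i^*$ between two consecutive arcs carries a rank-two isotropy group generated by $C_{i-1}$ and $C_i$. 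The slice theorem forces $m_{i-1}$ and $m_i$ to span a rank-two direct summand of $\mathbb{Z}^n$ on which the normal representation is the standard action of $T^2$ on $\mathbb{C}^2$, while effectiveness forces $m_1,\dots,m_k$ to generate $\mathbb{Z}^n$. The first step is thus to assign to $M$ this \emph{weighted orbit space} $(D^2;m_1,\dots,m_k)$, canonically up to the change-of-basis action of $GL(n,\mathbb{Z})$ and the symmetries of the disk.

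The central step is the equivariant classification: the assignment $M\mapsto(D^2;m_1,\dots,m_k)$ is a bijection between equivariant diffeomorphism classes of such $T^n$-manifolds and legally weighted disks modulo the above equivalence. Realization is a gluing construction — over the interior of $D^2$ one has the trivial principal $T^n$-bundle, over a collar of the $i$-th arc the standard local model $\mathbb{R}\times\bigl(T^n\times_{C_i}\mathbb{R}^2\bigr)$, and over each vertex the model $T^n\times_{T^2}\mathbb{C}^2$, all patched together using the prescribed weights. Rigidity is the reverse: running this reconstruction on a given $M$ and invoking uniqueness of invariant tubular neighborhoods along the codimension-one strata and around the isolated rank-two orbits, one checks that $M$ is determined equivariantly by the weights, the only gluing ambiguity being the normalizer symmetries already quotiented out. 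I expect this rigidity statement to be the main obstacle: it requires careful bookkeeping of the transition data along the circle-isotropy arcs and around the $T^2$-orbits, together with a genuine use of the classification of the local models and their equivariant self-equivalences.

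It remains to identify the underlying smooth manifold, by induction on $k$. The base case is the minimal value $k=n$: a direct computation with the standard weighted $n$-gon gives $S^5$ for $n=3$ and $S^3\times S^3$ for $n=4$. For $k\geq n+1$ one shows that a legally weighted disk can be split along an interior chord into a legally weighted disk with fewer vertices together with an elementary building block, that this splitting is realized on the manifold by an equivariant connected sum, and that the blocks that arise contribute exactly the summands $S^2\times S^n$, $S^2\tilde{\times}S^n$ (and, for $n=4$, the further $S^3\times S^3$ factors) in the statement; here Poincar\'e duality and the cohomology ring are needed to count, in dimension six, how many summands are $S^2\times S^4$ versus $S^3\times S^3$. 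Which of the twisted or untwisted options actually occurs is governed entirely by $w_2(M^{n+2})$, which is computable from the residues $m_i\bmod 2$ and is additive under the connected-sum decomposition, so comparing $w_2$ with $k$ pins down the diffeomorphism type. Simple connectivity is used throughout — it is what makes $M^*$ a disk with no interior isotropy and no orbits of types other than those listed, and it is what forces the form of the base case and of the building blocks.
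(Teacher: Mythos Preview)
The paper does not prove Theorem~\ref{Thm:Oh} at all: it is quoted as a black-box result due to Oh~\cite{Oh,Oh2}, in the same way that Theorem~\ref{T:OR} is quoted from Orlik--Raymond. There is therefore nothing in the paper to compare your proposal against; the only work the paper itself does in section~\ref{SS:D5} is to feed the bound $3\leq k\leq 4$ (resp.\ $k=4$) from Lemma~\ref{l:4T2} into Oh's classification and read off the list.

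That said, your outline is a fair high-level summary of the strategy Oh actually carries out in~\cite{Oh,Oh2}, which in turn extends the Orlik--Raymond analysis of~\cite{OR,OR2}: encode $M$ by the weighted disk $(D^2;m_1,\dots,m_k)$, prove that this is a complete equivariant invariant, and then identify the smooth type by an inductive connected-sum decomposition along interior chords. Two places where your sketch hides substantial work are exactly the ones you flag: the rigidity step (that the weights determine $M$ up to equivariant diffeomorphism, with the only ambiguity absorbed by $GL(n,\mathbb{Z})$ and the dihedral symmetry of the boundary) requires a careful isotopy argument along the singular strata, and the identification of the elementary building blocks --- in particular showing in dimension~$6$ that each added vertex contributes one $S^2\times S^4$ (or its twisted version) \emph{and} one $S^3\times S^3$ summand --- is where Oh's papers do most of their labor. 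Your remark that $w_2$ alone decides between the trivial and twisted $S^2$-bundle summand, and that it is additive under connected sum and computable from the weights mod~$2$, is correct and is how the dichotomy in the statement arises.
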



\section{Proof of Theorem~B}
\label{S:Proof_thm_B}

The arguments in dimension $n\leq 6$ are topological and require no curvature assumptions. A smooth $2$-manifold with a $T^2$ action must be homogeneous, and hence cannot be simply connected. Similarly, a smooth $3$-manifold with a $T^3$ action cannot be simply connected. 

For $n=3,4$ or $5$, a $T^n$-action on a smooth manifold $M^{n+1}$ is of  cohomogeneity one. It follows  in dimension $4$ from  Parker \cite{Pa}, and in higher dimensions from Pak \cite{P}, that
$M^{n+1}$ is homeomorphic to the product of a torus $T^{n-2}$ with a closed $3$-manifold of cohomogeneity one. Hence $M^n$ cannot be simply connected.

In dimensions $7$, $8$ and $9$, Corollary~\ref{c:nocohom2} rules out   the existence of a cohomogeneity two torus action.





\end{document}